\newtheorem{thm}{Theorem}
\newtheorem{defn}{Definition}
\newtheorem{lemma}{Lemma}
\newtheorem{pro}{Proposition}
\newtheorem{rk}{Remark}
\newtheorem{ex}{Example}
\numberwithin{equation}{section} \setcounter{tocdepth}{1}
\begin{document}
\title [A DISCRETE-TIME DYNAMICAL SYSTEM OF MOSQUITO POPULATION]
{A discrete-time dynamical system of wild mosquito population with Allee effects}

\author{U.A. Rozikov, Z.S. Boxonov}

\address{U.\ A.\ Rozikov \begin{itemize}
 \item[] V.I.Romanovskiy Institute of Mathematics of Uzbek Academy of Sciences;
\item[] AKFA University, 1st Deadlock 10, Kukcha Darvoza, 100095, Tashkent, Uzbekistan;
\item[] Faculty of Mathematics, National University of Uzbekistan.
\end{itemize}} \email {rozikovu@yandex.ru}

\address{Z.\ S.\ Boxonov\\ V.I.Romanovskiy Institute of Mathematics of Uzbek Academy of Sciences,
Tashkent, Uzbekistan.}
 \email {z.boxonov@mathinst.uz}

\begin{abstract} We study a discrete-time dynamical system of wild mosquito population with parameters: $\beta$ - the birth rate of adults;
$\alpha$ - maximum emergence rate; $\mu>0$ - the death rate of adults;  $\gamma$ - Allee effects. We prove that if $\gamma\geq\frac{\alpha(\beta-\mu)}{\mu^2}$
then the mosquito population dies and if $\gamma<\frac{\alpha(\beta-\mu)}{\mu^2}$ holds then extinction or survival of the mosquito population depends on their initial state.
\end{abstract}

\subjclass[2020] {92D25, (34C60 34D20 93C10 93C55)}

\keywords{mosquito population; Allee effects; fixed point; limit point.} \maketitle

\section{Introduction}

Mosquito population control is a vital public-health practice throughout the world and especially in the tropics because mosquitoes spread many diseases, such as malaria and various viruses.

Today several kind of mathematical models of mosquito population are known (see \cite{B}, \cite{J}, \cite{RV}, \cite{Rpd}  and references therein).

A mathematical model of mosquito dispersal in continuous time was investigated in \cite{LP}.
Recently, in \cite{RV} a discrete-time dynamical system, generated by an evolution operator of this mosquito population is studied.

It is known that during a lifetime  mosquitoes undergo complete metamorphosis going through four distinct stages of development: egg, larva, pupa and adult  \cite{B}, \cite{Mosquito}.

In \cite{J.Li} continuous-time model of mosquito population with Allee effects  is studied, here
 we consider a discrete-time dynamical system of this model. Consider a wild mosquito population without the presence of sterile mosquitoes and in a simplified stage-structured population, one groups the three aquatic stages into the larvae class by $x$, and divide the mosquito population into the larvae class and the adults, denoted by $y$. Moreover, assume that the density dependence exists only in the larvae stage \cite{J.Li}.

Denote the birth rate, i.e., the oviposition rate of adults by $\beta(t)$; the rate of emergence from larvae to adults by a function of the larvae with the form of $\alpha(1-k(x))$, where $\alpha>0$ is the maximum emergence rate, $0\leq k(x)\leq 1$, with $k(0)=0, k'(x)>0$, and $\lim\limits_{x\rightarrow
\infty}k(x)=1$, is the functional response due to
the intraspecific competition \cite{J}.
Moreover, we assume the death rate of larvae be a linear function, denoted by $d_{0}+d_{1}x$, and
the death rate of adults be constant, denoted by $\mu$. Then, in the absence of sterile mosquitoes, we get the following
system of equations:
\begin{equation}\label{sys}
\left\{%
\begin{array}{ll}
    \frac{dx}{dt}=\beta(t) y-\alpha(1-k(x))x-(d_{0}+d_{1}x)x,\\[3mm]
    \frac{dy}{dt}=\alpha(1-k(x))x-\mu y. \\
\end{array}%
\right.\end{equation}
    We further assume a functional response for $k(x)$, as in
    \cite{J}, in the form $$k(x)=\frac{x}{1+x}.$$

In \cite{J}, \cite{J.Li} the dynamical system (\ref{sys}) was studied for $\beta(t)=\beta=const$ (i.e. when mosquito adults have no difficulty to find their mates such that no Allee effects are concerned) and the discrete-time version of this model was considered in \cite{BR1} and \cite{BR2}.
A component Allee effect is defined as a decrease in any component of fitness with decreasing population size or density. A decrease in the probability of a female mating with decreasing male density is therefore a component Allee effect, and is generally referred to as a "mate-finding Allee effect" \cite{XF}.

In the case where adult mosquitoes have difficulty in finding their mates, Allee effects are
included and the adult birth rate is given by $$\beta(t)=\frac{\beta y(t)}{\gamma+y(t)},$$
where $\gamma$ is the Allee effect constant.

Then stage-structured wild mosquito population model
is given by \cite{J.Li}:
\begin{equation}\label{system}
\left\{%
\begin{array}{ll}
    \frac{dx}{dt}=\frac{\beta y^2}{\gamma+y}-\frac{\alpha x}{1+x}-(d_{0}+d_{1}x)x,\\[3mm]
    \frac{dy}{dt}=\frac{\alpha x}{1+x}-\mu y. \\
\end{array}%
\right.\end{equation}

In this paper (as in \cite{JU} -\cite{MS}, \cite{MPR} -\cite{RS}) we study the discrete time dynamical systems associated to the system (\ref{system}). Define the operator
$W:{\mathbb{R}^2}\rightarrow {\mathbb{R}^2}$ by
\begin{equation}\label{systema}
\left\{%
\begin{array}{ll}
    x'=\frac{\beta y^2}{\gamma+y}-\frac{\alpha x}{1+x}-(d_{0}+d_{1}x)x+x,\\[3mm]
    y'=\frac{\alpha x}{1+x}-\mu y+y, \\
\end{array}%
\right.\end{equation}
where $\alpha >0, \beta >0, \gamma>0, \mu >0,  \ d_{0}\geq0,\ d_{1}\geq0.$

In this paper we consider the operator $W$ (defined by (\ref{systema})) for the case $d_{0}=d_{1}=0$ and our
aim is to study trajectories $z^{(n)}=W(z^{(n-1)})$, $n\geq 1$ of any initial point $z^{(0)}=(x^{(0)}, y^{(0)})$.

Note that this system (\ref{systema}), for the case when $d_{0}\ne 0$ or $d_{1}\ne 0$ is not studied yet.

\section{Dynamical system generated by the operator (\ref{systema})}

We assume
\begin{equation}\label{par}
d_{0}=d_{1}=0
\end{equation}
 then (\ref{systema}) has the following form
\begin{equation}\label{systemacase}
W_{0}:\left\{%
\begin{array}{ll}
    x'=\frac{\beta y^2}{\gamma+y}-\frac{\alpha x}{1+x}+x,\\[3mm]
    y'=\frac{\alpha x}{1+x}-\mu y+y. \\
\end{array}%
\right.\end{equation}

It is easy to see that if
\begin{equation}\label{parametr}
0<\alpha\leq1, \ \beta >0, \ \gamma>0,\  0<\mu\leq1
\end{equation}
then operator (\ref{systemacase}) maps $\mathbb{R}_{+}^{2}=\{(x,y)\in\mathbb{R}^{2}: x\geq0, y\geq0\}$ to itself.

\subsection{Fixed points.}\

A point $z\in\mathbb{R}_{+}^{2}$ is called a fixed point of $W_{0}$ if
$W_{0}(z)=z$.

For fixed point of $W_{0}$ the following holds.
\begin{pro}\label{fixed} The fixed points for (\ref{systemacase}) are as follows:
\begin{itemize}
  \item If $\beta\leq\mu(1+\frac{\gamma\mu}{\alpha})$ then the operator (\ref{systemacase}) has a unique fixed point $z=(0,0).$
  \item If $\beta>\mu(1+\frac{\gamma\mu}{\alpha})$ then mapping (\ref{systemacase}) has two
fixed points $$z_{1}=(0,0), \ \ z_{2}=\left(\frac{\gamma\mu^2}{\alpha(\beta-\mu)-\gamma\mu^2},\frac{\gamma\mu}{\beta-\mu}\right).$$
\end{itemize}
\end{pro}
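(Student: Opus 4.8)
The plan is to solve the fixed-point system $W_0(z)=z$ by elimination. Writing $z=(x,y)$ and cancelling the additive $x$ and $y$ that appear on both sides of (\ref{systemacase}), the condition $W_0(z)=z$ is equivalent to the pair
\[
\frac{\beta y^2}{\gamma+y}=\frac{\alpha x}{1+x},\qquad \frac{\alpha x}{1+x}=\mu y .
\]
First I would substitute the second relation into the first to obtain $\frac{\beta y^2}{\gamma+y}=\mu y$, an equation in $y$ alone, and solve the second relation for $x$, namely $x=\frac{\mu y}{\alpha-\mu y}$. The denominator here never vanishes at a fixed point: if $\alpha=\mu y$, then $x(\alpha-\mu y)=\mu y$ gives $0=\mu y=\alpha$, contradicting $\alpha>0$. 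So $x$ is well defined as a function of $y$ throughout.

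Next I would split into two cases according to whether $y=0$. If $y=0$, then $\frac{\alpha x}{1+x}=\mu y=0$ together with $\alpha>0$ and $x\ge 0$ forces $x=0$, giving the fixed point $z_1=(0,0)$, which therefore always exists. If $y\neq 0$, dividing $\frac{\beta y^2}{\gamma+y}=\mu y$ by $y$ yields $\beta y=\mu(\gamma+y)$, hence $y=\frac{\gamma\mu}{\beta-\mu}$, which is a genuine positive value only when $\beta>\mu$. Substituting this into $x=\frac{\mu y}{\alpha-\mu y}$ and simplifying gives $x=\frac{\gamma\mu^2}{\alpha(\beta-\mu)-\gamma\mu^2}$, i.e. exactly the coordinates of $z_2$ in the statement.

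Finally I would check positivity. Since the numerator $\gamma\mu^2>0$, we have $x>0$ precisely when $\alpha(\beta-\mu)-\gamma\mu^2>0$, that is $\beta>\mu\bigl(1+\tfrac{\gamma\mu}{\alpha}\bigr)$; and this inequality already implies $\beta>\mu$, so the two constraints arising in the $y\neq 0$ case collapse into the single threshold in the statement. When $\beta\le\mu\bigl(1+\tfrac{\gamma\mu}{\alpha}\bigr)$ the candidate second point fails to lie in $\mathbb{R}_+^2$: at equality its $x$-coordinate is undefined (the denominator vanishes), while for strict inequality either $\beta\le\mu$ makes $y$ negative or $\alpha(\beta-\mu)-\gamma\mu^2<0$ makes $x$ negative; hence $z_1$ is the unique fixed point there. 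When the inequality is strict, both $z_1$ and $z_2$ lie in $\mathbb{R}_+^2$ and no other solutions exist. The computation is elementary throughout; the only point requiring care is the sign bookkeeping — verifying that the auxiliary conditions $\beta>\mu$, $\alpha\neq\mu y$, and $x>0$ are all subsumed by the single inequality $\beta>\mu(1+\gamma\mu/\alpha)$, and that the boundary case $\beta=\mu(1+\gamma\mu/\alpha)$ contributes no extra fixed point.
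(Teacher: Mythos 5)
Your proposal is correct and follows essentially the same route as the paper: reduce $W_0(z)=z$ to $\frac{\beta y^2}{\gamma+y}=\frac{\alpha x}{1+x}=\mu y$, solve for the two candidate points, and observe that $z_2$ lies in $\mathbb{R}_+^2$ exactly when $\beta>\mu(1+\frac{\gamma\mu}{\alpha})$. Your write-up is in fact more complete than the paper's (which merely asserts the two solutions), since your elimination also establishes that no other fixed points exist.
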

\begin{proof} We need to solve
\begin{equation}\label{Fsystema}
\left\{%
\begin{array}{ll}
    x=\frac{\beta y^2}{\gamma+y}-\frac{\alpha x}{1+x}+x,\\[3mm]
    y=\frac{\alpha x}{1+x}-\mu y+y \\
\end{array}%
\right.\end{equation}
It is easy to see that $x_{1}=0,\ y_{1}=0$ and $x_{2}=\frac{\gamma\mu^2}{\alpha(\beta-\mu)-\gamma\mu^2},\ y_{2}=\frac{\gamma\mu}{\beta-\mu}$ are solution to (\ref{Fsystema}).
If $\beta\leq\mu(1+\frac{\gamma\mu}{\alpha})$ then $x_{2}\notin\mathbb{R}_{+},$ otherwise $x_{2}\in\mathbb{R}_{+}.$
\end{proof}

\subsection{The type of the fixed point.}\

Now we shall examine the type of the fixed point.
\begin{defn}\label{d1}
(see \cite{D}) A fixed point $z$ of an operator $W$ is called
\texttt{hyperbolic} if its Jacobian $J$ at $z$ has no eigenvalues on the
unit circle.
\end{defn}

\begin{defn}\label{d2}
(see \cite{D}) A hyperbolic fixed point $z$ is called:
\begin{itemize}
\item[1)] \texttt{attracting} if all the eigenvalues of the Jacobian matrix $J(z)$
are less than 1 in absolute value;

\item[2)] \texttt{repelling} if all the eigenvalues of the Jacobian matrix $J(z)$
are greater than 1 in absolute value;

\item[3)] a \texttt{saddle} otherwise.
\end{itemize}
\end{defn}
To find the type of a fixed point of the operator (\ref{systemacase})
we write the Jacobian matrix:

$$J(z)=J_{W_{0}}=\left(%
\begin{array}{cc}
  1-\frac{\alpha}{(1+x)^2} & \frac{\beta y(2\gamma+y)}{(\gamma+y)^2} \\
  \frac{\alpha}{(1+x)^2} & 1-\mu \\
\end{array}%
\right).$$

The eigenvalues of the Jacobian matrix at the fixed point $(0.0)$ are as follows
$$\lambda_{1}=1-\alpha,\ \lambda_{2}=1-\mu.$$
By (\ref{parametr}) we have $0\leq\lambda_{1,2}<1$.

Let
\begin{equation}\label{x^*y^*}x^*=\frac{\gamma\mu^2}{\alpha(\beta-\mu)-\gamma\mu^2},\ y^*=\frac{\gamma\mu}{\beta-\mu}.
\end{equation}
We calculate eigenvalues of Jacobian matrix at the fixed point ($x^*,y^*)$. If we denote $1-\lambda=\Lambda$ then we obtain
$$\Lambda^2-(\mu+A)\Lambda+A(\mu-B)=0,$$
where $A=\frac{\alpha}{(1+x^*)^2},\ B=\frac{\beta y^*(2\gamma+y^*)}{(\gamma+y^*)^2}.$
\begin{equation}\label{l12}
\Lambda_{1}=1-\lambda_{1}=\frac{1}{2}\left(\mu+A+\sqrt{(\mu-A)^2+4AB}\right), \ \Lambda_{2}=1-\lambda_{2}=\frac{1}{2}\left(\mu+A-\sqrt{(\mu-A)^2+4AB}\right).
\end{equation}

The inequality $|\lambda_{1,2}|<1$ is equivalent to $0<\Lambda_{1,2}<2.$

Since $\mu+A<2$  (see \ref{parametr}), the inequality $0<\Lambda_{1,2}<2$ is equivalent to the following:
$$\left\{%
\begin{array}{ll}
    \mu+A-\sqrt{(\mu-A)^2+4AB}>0  \\[2mm]
    \mu+A+\sqrt{(\mu-A)^2+4AB}<4.
\end{array}%
\right.$$
From the first inequality $\mu+A-\sqrt{(\mu-A)^2+4AB}>0$ of the system  we have $\mu>B$.

If we consider
$$B=\frac{\beta y^*(2\gamma+y^*)}{(\gamma+y^*)^2}=\beta(1-\frac{\gamma^2}{(\gamma+y^*)^2})=\beta-\frac{1}{\beta}(\beta-\mu)^2, \ \ \beta>\mu(1+\frac{\gamma\mu}{\alpha})$$
then $\mu>\beta-\frac{1}{\beta}(\beta-\mu)^2.$ From this $$\frac{1}{\beta}(\beta-\mu)^2>\beta-\mu \ \Rightarrow \ \mu<0.$$. It is a contradiction for (\ref{parametr}).
Therefore, the fixed point ($x^*,y^*)$ is not attracting.

The inequality $|\lambda_{1,2}|>1$ is equivalent to $\Lambda_1<0$ \ or \ $\Lambda_2>2$ \ or \ $\Lambda_2<0,\ \Lambda_1>2$.

For the values of parameters given in (\ref{parametr}) the inequalities
$$\Lambda_1=\frac{1}{2}\left(\mu+A+\sqrt{(\mu-A)^2+4AB}\right)<0,\ \ \Lambda_2=\frac{1}{2}\left(\mu+A-\sqrt{(\mu-A)^2+4AB}\right)>2$$ do not hold. Next we check the case $\Lambda_2<0,\ \Lambda_1>2:$

\begin{equation}\label{l1l2}\left\{%
\begin{array}{ll}
    \Lambda_2=\frac{1}{2}\left(\mu+A-\sqrt{(\mu-A)^2+4AB}\right)<0  \\[2mm]
    \Lambda_1=\frac{1}{2}\left(\mu+A+\sqrt{(\mu-A)^2+4AB}\right)>2.
\end{array}%
\right.
\end{equation}
From the first inequality in the system (\ref{l1l2}) one has $\mu>0,$ i.e., the first inequality is always true under condition (\ref{parametr}). The second inequality of the system derives:
\begin{equation}
\alpha^2-2\left(\frac{\gamma\mu^2}{\beta-\mu}+\frac{\beta(2-\mu)}{2\beta+\mu(\beta-\mu)}\right)\alpha+\left(\frac{\gamma\mu^2}{\beta-\mu}\right)^2>0.
\end{equation}
Let
\begin{equation}\label{alpha12}
\begin{split}
\alpha_1&=\frac{\gamma\mu^2}{\beta-\mu}+\frac{\beta(2-\mu)}{2\beta+\mu(\beta-\mu)}+\sqrt{\frac{\beta(2-\mu)}{2\beta+\mu(\beta-\mu)}\left(\frac{2\gamma\mu^2}{\beta-\mu}+\frac{\beta(2-\mu)}{2\beta+\mu(\beta-\mu)}\right)},\\
 \alpha_2&=\frac{\gamma\mu^2}{\beta-\mu}+\frac{\beta(2-\mu)}{2\beta+\mu(\beta-\mu)}-\sqrt{\frac{\beta(2-\mu)}{2\beta+\mu(\beta-\mu)}\left(\frac{2\gamma\mu^2}{\beta-\mu}+\frac{\beta(2-\mu)}{2\beta+\mu(\beta-\mu)}\right)}.
 \end{split}
\end{equation}

It is obvious that the second inequality in (\ref{l1l2}) is not true under condition $\frac{\gamma\mu^2}{\beta-\mu}<\alpha<\alpha_1\leq1$ . Hence, the fixed point $(x^*,y^*)$ is not repelling. If  $\alpha_1<\alpha$, then the fixed point $(x^*,y^*)$ is repelling.

Thus for the type of fixed points the following proposition holds.

\begin{pro}\label{type} Let $\alpha_1, \ \alpha_2$ are defined in (\ref{alpha12}). The type of the fixed points for (\ref{systemacase}) are as follows:
\begin{itemize}
  \item[i)] if $\beta\leq\mu(1+\frac{\gamma\mu}{\alpha})$,  then the operator (\ref{systemacase}) has unique fixed point $(0,0)$ which is attracting.
  \item[ii)]if \ $\beta>\mu(1+\frac{\gamma\mu}{\alpha})$, then the operator has two fixed points $(0,0)$, $(x^*,y^*)$, and the point $(0,0)$ is attracting,  the point
  $$(x^*,y^*)=\left\{\begin{array}{lll}
  repelling, \ \ \mbox{if} \ \ \alpha >\alpha_1 \\[2mm]
  saddle, \ \ \mbox{if} \ \ \alpha<\alpha_1\leq1 \\[2mm]
  non-hyperbolic, \ \ \mbox{if} \ \  \alpha=\alpha_{1},\alpha_{2}.
  \end{array}\right.$$
  \end{itemize}
\end{pro}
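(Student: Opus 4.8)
The plan is to assemble Proposition~\ref{type} directly from the case analysis that precedes it, so the ``proof'' is mostly a matter of organizing the eigenvalue computations into the three regimes and verifying the remaining boundary case. First I would recall from Proposition~\ref{fixed} the fixed-point count: a unique fixed point $(0,0)$ when $\beta\le\mu(1+\frac{\gamma\mu}{\alpha})$, and two fixed points $(0,0)$, $(x^*,y^*)$ otherwise. For the origin, the Jacobian is lower triangular with eigenvalues $\lambda_1=1-\alpha$, $\lambda_2=1-\mu$, and condition~(\ref{parametr}) forces $0\le\lambda_{1,2}<1$; hence $|\lambda_{1,2}|<1$ and $(0,0)$ is attracting in both items (i) and (ii), which settles everything about the origin.

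Next I would treat $(x^*,y^*)$ under the assumption $\beta>\mu(1+\frac{\gamma\mu}{\alpha})$. The computation before the statement shows that with $\Lambda_i=1-\lambda_i$ the eigenvalues satisfy $\Lambda^2-(\mu+A)\Lambda+A(\mu-B)=0$ with $A=\frac{\alpha}{(1+x^*)^2}$, $B=\frac{\beta y^*(2\gamma+y^*)}{(\gamma+y^*)^2}$, and $|\lambda_{1,2}|<1\Leftrightarrow 0<\Lambda_{1,2}<2$. I would reproduce the argument that ``attracting'' is impossible: $0<\Lambda_{1,2}$ forces $\mu>B=\beta-\frac1\beta(\beta-\mu)^2$, which rearranges to $\mu<0$, contradicting~(\ref{parametr}). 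For ``repelling'', $|\lambda_{1,2}|>1$ reduces (since $\Lambda_1<0$ or $\Lambda_2>2$ cannot occur under~(\ref{parametr})) to the system~(\ref{l1l2}); its first inequality $\Lambda_2<0$ is automatic, and its second inequality $\Lambda_1>2$ is, after squaring and substituting $x^*,y^*$, equivalent to the quadratic inequality in $\alpha$ with roots $\alpha_1>\alpha_2$ from~(\ref{alpha12}). One checks $\alpha_2<\frac{\gamma\mu^2}{\beta-\mu}<\alpha_1$, so on the admissible range $\alpha>\frac{\gamma\mu^2}{\beta-\mu}$ the inequality holds precisely when $\alpha>\alpha_1$; thus $(x^*,y^*)$ is repelling for $\alpha>\alpha_1$ and, having excluded attracting and repelling, is a saddle when $\frac{\gamma\mu^2}{\beta-\mu}<\alpha<\alpha_1\le1$.

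Finally I would handle the non-hyperbolic boundary $\alpha=\alpha_1$ or $\alpha=\alpha_2$. By Definition~\ref{d1} it suffices to show an eigenvalue lies on the unit circle, i.e. $\Lambda_1=2$ (equivalently $\lambda_1=-1$), and this is exactly the equality case of the quadratic inequality: when $\alpha\in\{\alpha_1,\alpha_2\}$ the discriminant expression makes the second inequality of~(\ref{l1l2}) an equality, giving $\Lambda_1=2$. Since the other eigenvalue then satisfies $\Lambda_2=\mu+A-2\in(-1,?)$ but in any case $\lambda_2\ne\pm1$ generically, the fixed point has exactly one eigenvalue on the unit circle and is non-hyperbolic. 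The main obstacle I anticipate is bookkeeping rather than conceptual: one must confirm that the root $\alpha_2$ (and, for the stated ranges, also $\alpha_1$) interacts correctly with the standing constraints $\alpha\le1$ and $\alpha>\frac{\gamma\mu^2}{\beta-\mu}$ so that the three cases in the displayed formula genuinely partition the parameter region, and that squaring the inequality $\mu+A+\sqrt{(\mu-A)^2+4AB}>4$ does not introduce spurious solutions (here $\mu+A<2$ guarantees the left side minus $\mu+A$ is the nonnegative square root, so squaring $\sqrt{(\mu-A)^2+4AB}>4-(\mu+A)$ is reversible when $4-(\mu+A)>0$, which holds). Assembling these observations yields the proposition.
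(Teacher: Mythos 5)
Your proposal is correct and follows essentially the same route as the paper, whose ``proof'' of Proposition~\ref{type} is precisely the eigenvalue discussion preceding the statement: triangular Jacobian at the origin, the substitution $\Lambda=1-\lambda$ and the quadratic $\Lambda^2-(\mu+A)\Lambda+A(\mu-B)=0$ at $(x^*,y^*)$, the contradiction $\mu<0$ ruling out attraction, the reduction of repulsion to the quadratic inequality in $\alpha$ with roots $\alpha_1,\alpha_2$, and the saddle/non-hyperbolic cases by exclusion and by the equality case $\Lambda_1=2$. Your added bookkeeping (reversibility of the squaring since $\mu+A<2$, and the check $\alpha_2<\frac{\gamma\mu^2}{\beta-\mu}<\alpha_1$, which in fact shows the $\alpha=\alpha_2$ clause is vacuous on the region where $(x^*,y^*)$ exists) only makes the argument more complete than the paper's.
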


\subsection{The limits of trajectories}\

The following theorem  describes the trajectory of any initial point $(x^{(0)}, y^{(0)})$ in $\mathbb{R}^2_{+}$.
\begin{thm}\label{pr} For the operator $W_{0}$ given by (\ref{systemacase}) for any initial point $(x^{(0)}, y^{(0)})\in \mathbb R^2_+$ the following hold:
\begin{itemize}
\item[(i)] If $y^{(n)}>\frac{\alpha}{\mu}$ for any natural number $n$ then $\lim\limits_{n\to \infty}x^{(n)}=+\infty, \ \lim\limits_{n\to \infty}y^{(n)}=\frac{\alpha}{\mu};$
\item[(ii)] If there exists $n_{0}$ number such that  $y^{(n_{0})}\leq\frac{\alpha}{\mu}$ and $\beta\leq\mu(1+\frac{\gamma\mu}{\alpha})$ then $\lim\limits_{n\to \infty}x^{(n)}=0, \ \lim\limits_{n\to \infty}y^{(n)}=0;$
\end{itemize}
where $(x^{(n)}, y^{(n)})=W_0^n(x^{(0)}, y^{(0)})$, with $W_{0}^n$ is $n$-th iteration of $W_{0}$.
\end{thm}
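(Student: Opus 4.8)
The plan is to analyze the two coordinate recursions of $W_0$ separately, exploiting the fact that the $y$-update $y' = \frac{\alpha x}{1+x} - \mu y + y$ is governed by the bounded term $\frac{\alpha x}{1+x} < \alpha$. The first observation I would record is a monotonicity/trapping fact for the $y$-coordinate: since $\frac{\alpha x}{1+x} \in [0,\alpha)$ for $x \ge 0$, we have $y' < \alpha + (1-\mu)y$, so $y' - \frac{\alpha}{\mu} < (1-\mu)\bigl(y - \frac{\alpha}{\mu}\bigr)$; hence if ever $y^{(n_0)} \le \frac{\alpha}{\mu}$ then $y^{(n)} \le \frac{\alpha}{\mu}$ for all $n \ge n_0$ (the half-line $y \le \frac{\alpha}{\mu}$ is forward-invariant in the $y$-direction), and moreover, if $y^{(n)} > \frac{\alpha}{\mu}$ for all $n$, the same contraction estimate forces $y^{(n)} \downarrow \frac{\alpha}{\mu}$ monotonically. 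This already gives the $y$-limit in part (i).

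For part (i), with $y^{(n)} > \frac{\alpha}{\mu}$ for all $n$, I would turn to the $x$-update $x' = \frac{\beta (y)^2}{\gamma + y} - \frac{\alpha x}{1+x} + x$. Since $\frac{\alpha x}{1+x} < 1$ and $y^{(n)} > \frac{\alpha}{\mu}$, we get a uniform lower bound $x^{(n+1)} > x^{(n)} + \frac{\beta (\alpha/\mu)^2}{\gamma + (\alpha/\mu)} - 1 =: x^{(n)} + c$; the point is to check that the constant $c$ is strictly positive under the standing hypotheses, or more carefully, that $\frac{\beta y^2}{\gamma+y}$ stays bounded below by a quantity exceeding $\frac{\alpha x}{1+x}$'s supremum $1$ — for this one uses that $y^{(n)}$ is bounded below by something strictly larger than $\frac{\alpha}{\mu}$ for all but finitely many $n$, or handles the boundary case by a slightly finer estimate. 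Once $x^{(n)} \ge x^{(0)} + cn \to \infty$ with $c > 0$, we conclude $\lim x^{(n)} = +\infty$, and combined with the first paragraph this finishes (i).

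For part (ii), the hypothesis $\beta \le \mu\bigl(1 + \frac{\gamma\mu}{\alpha}\bigr)$ is exactly the condition (Proposition \ref{fixed}) under which $(0,0)$ is the unique fixed point, and by Proposition \ref{type} it is attracting. By the first paragraph we may assume $y^{(n)} \le \frac{\alpha}{\mu}$ for all $n \ge n_0$. The strategy is to show that on the strip $\{0 \le y \le \frac{\alpha}{\mu}\}$ both coordinates are eventually driven to $0$: bound $\frac{\beta y^2}{\gamma+y} \le \frac{\beta}{\gamma} y^2 \le \frac{\beta}{\gamma}\cdot\frac{\alpha}{\mu} y$ (or use $\frac{\beta y^2}{\gamma+y}\le \frac{\beta}{\mu}\cdot\frac{\alpha}{\mu+\cdots}$ type bounds tied to the fixed-point inequality), so that $x' \le x + \frac{\beta y^2}{\gamma+y} - \frac{\alpha x}{1+x}$ and summing the two coordinates one can build a Lyapunov-type quantity $x^{(n)} + \kappa y^{(n)}$ that is eventually strictly decreasing, using $\beta \le \mu + \frac{\gamma\mu^2}{\alpha}$ to make the telescoped inequality work; its limit $L \ge 0$ must then be a fixed point value, forcing $L = 0$, and since $(0,0)$ is attracting one upgrades convergence of the sum to convergence of each coordinate to $0$.

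The main obstacle I expect is part (ii): verifying that the candidate Lyapunov function $x^{(n)} + \kappa y^{(n)}$ (for a suitable constant $\kappa$) is genuinely monotone decreasing on the invariant strip. This requires combining the two recursions so the "gain" term $\frac{\beta y^2}{\gamma+y}$ in the $x$-equation is dominated by the "loss" terms $-\mu y$ (in the $y$-equation, weighted by $\kappa$) and $-\frac{\alpha x}{1+x}$, and this domination is precisely where the hypothesis $\beta \le \mu(1 + \frac{\gamma\mu}{\alpha})$ must be used in a sharp way — a weaker bound would not close the estimate. A secondary subtlety is the boundary case $y^{(n)} \equiv \frac{\alpha}{\mu}$ in part (i), which may need to be excluded or handled by noting it is not consistent with the $y$-recursion unless $x$ is constant, which it is not.
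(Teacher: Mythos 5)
Your treatment of the $y$-coordinate (the contraction $y'-\frac{\alpha}{\mu}<(1-\mu)(y-\frac{\alpha}{\mu})$, forward invariance of $\{y\le\frac{\alpha}{\mu}\}$, and $y^{(n)}\downarrow\frac{\alpha}{\mu}$ in case (i)) matches the paper and is fine. The gap in part (i) is your argument that $x^{(n)}\to+\infty$. The constant $c=\frac{\beta(\alpha/\mu)^2}{\gamma+\alpha/\mu}-1$ has no reason to be positive: part (i) places no hypothesis relating $\beta,\gamma$ to $\alpha,\mu$, so $c$ can be negative and the bound $x^{(n)}\ge x^{(0)}+cn$ proves nothing. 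Worse, your proposed repair --- that $y^{(n)}$ is eventually bounded below by something strictly larger than $\frac{\alpha}{\mu}$ --- is incompatible with what you just proved, namely $y^{(n)}\downarrow\frac{\alpha}{\mu}$. The correct (and easier) route, which the paper takes, is to feed the limit back into the $y$-recursion: from $y^{(n)}=\frac{\alpha x^{(n-1)}}{1+x^{(n-1)}}+(1-\mu)y^{(n-1)}$ and $y^{(n)}\to\frac{\alpha}{\mu}$ one gets $\frac{x^{(n-1)}}{1+x^{(n-1)}}\to 1$, which forces $x^{(n)}\to+\infty$. No lower bound on the per-step increment of $x$ is needed.

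In part (ii) the gap is the passage from convergence of the scalar quantity $x^{(n)}+\kappa y^{(n)}$ to convergence of the pair $(x^{(n)},y^{(n)})$. Knowing only that a single linear combination converges, you cannot assert that its limit ``must be a fixed point value,'' and the local attractivity of $(0,0)$ from Proposition \ref{type} does not give global convergence on the strip. The paper closes exactly this hole by using \emph{two} decreasing Lyapunov quantities, $c^{(n)}=x^{(n)}+y^{(n)}$ and $c_0^{(n)}=kx^{(n)}+y^{(n)}$ with $k=\mu(1+\frac{\gamma\mu}{\alpha})/\beta>1$: both converge, hence $x^{(n)}=\frac{c^{(n)}-c_0^{(n)}}{1-k}$ and $y^{(n)}=c^{(n)}-x^{(n)}$ converge individually, and only then does the fixed-point equation apply, giving $(0,0)$ by Proposition \ref{fixed}. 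Incidentally, the step you flag as the main obstacle is not one: for $\kappa=1$ the decrement is exactly $\frac{y^{(n-1)}}{\gamma+y^{(n-1)}}\left((\beta-\mu)y^{(n-1)}-\gamma\mu\right)$, which is $\le 0$ as soon as $y^{(n-1)}\le\frac{\alpha}{\mu}$ and $\beta\le\mu(1+\frac{\gamma\mu}{\alpha})$. The genuine difficulty is recovering the two coordinates separately, and your proposal does not contain the idea needed for that.
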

\begin{proof} First, we prove the assertion $(i)$. Let all values of $y^{(n)}$ are greater than $\frac{\alpha}{\mu}$. Then
$$y^{(n+1)}-y^{(n)}=\frac{\alpha x^{(n)}}{1+x^{(n)}}-\mu y^{(n)}<\alpha-\mu y^{(n)}=\mu(\frac{\alpha}{\mu}-y^{(n)})<0.$$ So $y^{(n)}$ is a decreasing sequence. Since  $y^{(n)}$ is decreasing and bounded from below we have:
\begin{equation}\label{y[n]>a}
\lim_{n\to \infty}y^{(n)}\geq\frac{\alpha}{\mu}.
\end{equation}
We estimate $y^{(n)}$ by the following:
 $$y^{(n)}=\frac{\alpha x^{(n-1)}}{1+x^{(n-1)}}+(1-\mu)y^{(n-1)}<\alpha+(1-\mu)y^{(n-1)}<\alpha+(1-\mu)(\alpha+(1-\mu)y^{(n-2)})$$
 $$<\alpha+\alpha(1-\mu)+(1-\mu)^2(\alpha+(1-\mu)y^{(n-3)})< ...<\alpha+\alpha(1-\mu)+\alpha(1-\mu)^2+...+\alpha(1-\mu)^{n-1}$$ $$+(1-\mu)^{n}y^{(0)}=\frac{\alpha}{\mu}+(1-\mu)^{n}(y^{(0)}-\frac{\alpha}{\mu}).$$
Thus $y^{(n)}<\frac{\alpha}{\mu}+(1-\mu)^{n}(y^{(0)}-\frac{\alpha}{\mu})$. Consequently
\begin{equation}\label{y[n]<a}
\lim_{n\to \infty}y^{(n)}\leq\frac{\alpha}{\mu}.
\end{equation}
 By (\ref{y[n]>a}) and (\ref{y[n]<a}) we have \begin{equation}\label{y[n]=a}
 \lim\limits_{n\to \infty}y^{(n)}=\frac{\alpha}{\mu}.
\end{equation}
From (\ref{y[n]=a}) and $y^{(n)}=\frac{\alpha x^{(n-1)}}{1+x^{(n-1)}}-\mu y^{(n-1)}+y^{(n-1)}$ it follows $\lim\limits_{n\to \infty}x^{(n)}=+\infty.$

Let's prove the assertion $(ii)$. If $y^{(n-1)}\leq\frac{\alpha}{\mu}$ then $y^{(n)}<\frac{\alpha}{\mu}$. Indeed,
  $$y^{(n)}=(1-\mu)y^{(n-1)}+\frac{\alpha x^{(n-1)}}{1+x^{(n-1)}}\leq(1-\mu)\frac{\alpha}{\mu}+\frac{\alpha x^{(n-1)}}{1+x^{(n-1)}}$$
  $$=\frac{\alpha}{\mu}-\alpha(1-\frac{x^{(n-1)}}{1+x^{(n-1)}})<\frac{\alpha}{\mu}.$$

Let
\begin{equation}\label{beta}
\beta<\mu(1+\frac{\gamma\mu}{\alpha}).
\end{equation}
If $\beta<\mu(1+\frac{\gamma\mu}{\alpha})$ then there exists $k>1$ such that $\beta\cdot k=\mu(1+\frac{\gamma\mu}{\alpha})$.
Denote
$c^{(n)}=x^{(n)}+y^{(n)}$ and $c^{(n)}_{0}=k\cdot x^{(n)}+y^{(n)}$, where $x^{(n)}, y^{(n)}$ defined by the following
\begin{equation}\label{recc}\begin{array}{ll}
x^{(n)}=\frac{\beta (y^{(n-1)})^2}{\gamma+y^{(n-1)}}-\frac{\alpha x^{(n-1)}}{1+x^{(n-1)}}+x^{(n-1)},\\[3mm]
 y^{(n)}=\frac{\alpha x^{(n-1)}}{1+x^{(n-1)}}-\mu y^{(n-1)}+y^{(n-1)},\ \ n=1,2,3,...\ .
 \end{array}\end{equation}

By $y^{(n-1)}\leq\frac{\alpha}{\mu}$ and $\beta<\mu(1+\frac{\gamma\mu}{\alpha})$ we have
\begin{equation}\label{x+y}
(\beta-\mu)y^{(n-1)}-\gamma\mu<0,\ \ (1+\frac{\gamma\mu}{\alpha})\frac{y^{(n-1)}}{\gamma+y^{(n-1)}}-1<0.
\end{equation}
By using (\ref{x+y}) to the equalities $c^{(n)}$ and $c_{0}^{(n)}$ we obtain the followings.

$$c^{(n)}=x^{(n)}+y^{(n)}=x^{(n-1)}+y^{(n-1)}+$$
$$+\frac{y^{(n-1)}}{\gamma+y^{(n-1)}}((\beta-\mu)y^{(n-1)}-\gamma\mu)< x^{(n-1)}+y^{(n-1)}=c^{(n-1)},$$
$$c_{0}^{(n)}=k\cdot x^{(n)}+y^{(n)}=k\cdot x^{(n-1)}+y^{(n-1)}
+(1-k)\frac{\alpha x^{(n-1)}}{1+x^{(n-1)}}+$$ $$ +\mu y^{(n-1)}((1+\frac{\gamma\mu}{\alpha})\frac{y^{(n-1)}}{\gamma+y^{(n-1)}}-1)<k\cdot x^{(n-1)}+y^{(n-1)}=c_{0}^{(n-1)}.$$

Hence both sequences $\{c^{(n)}\}$ and $\{c^{(n)}_{0}\}$ are monotone and bounded,
i.e.,
$$0<...< c^{(n)}< c^{(n-1)}<...< c^{(0)},$$
$$ 0<...< c^{(n)}_{0}< c^{(n-1)}_{0}<...< c^{(0)}_{0}.$$
Thus $\{c^{(n)}\}$ and $\{c^{(n)}_{0}\}$ have limit points, denote the limits by $c^*$ and $c^*_{0}$ respectively.
Consequently, the following limits exist
$$\tilde{x}=\lim_{n\to \infty}x^{(n)}=\frac{1}{1-k}\lim_{n\to \infty}(c^{(n)}-c^{(n)}_{0})=\frac{1}{1-k}(c^*-c^*_{0}),$$
$$\tilde{y}=\lim_{n\to \infty}y^{(n)}=c^*-\tilde{x}.$$
 and by (\ref{recc}) we have
$$\tilde{x}=\frac{\beta \tilde{y}^2}{\gamma+\tilde{y}}-\frac{\alpha \tilde{x}}{1+\tilde{x}}+\tilde{x},\ \
 \tilde{y}=\frac{\alpha \tilde{x}}{1+\tilde{x}}-\mu \tilde{y}+\tilde{y},$$
i.e., $\tilde{x}=0, \tilde{y}=0.$

In the case $\beta=\mu(1+\frac{\gamma\mu}{\alpha})$ also the monotone decreasing sequence $c^{(n)}$ is bounded from below. From the existence of the limit of $c^{(n)}$ and by (\ref{recc}) we have $\lim\limits_{n\to \infty}x^{(n)}=0, \ \lim\limits_{n\to \infty}y^{(n)}=0.$
\end{proof}

\subsection{Dynamics on invariant sets.}\

A set $A$ is called invariant with respect to $W_{0}$ if $W_{0}(A)\subset A$.

Denote
$$\Omega_1=\{(x,y)\in\mathbb{R}_+^{2}, \ \ 0\leq x\leq x^*, \ \ 0\leq y\leq y^*\}\setminus\{(x^*,y^*)\}$$
$$\Omega_2=\{(x,y)\in\mathbb{R}_+^{2}, \ \ x^*\leq x, y^*\leq y\}\setminus\{(x^*,y^*)\}$$
where $(x^*,y^*)$ is the fixed point defined by $(\ref{x^*y^*})$.

Let us consider the dynamics of the operator $W_{0}$ given by (\ref{systemacase}) in the sets $\Omega_1,\ \Omega_2$ under condition $\beta>\mu(1+\frac{\gamma\mu}{\alpha}).$
\begin{lemma} The sets $\Omega_1$ and $\Omega_2$ are invariant with respect to $W_{0}.$
\end{lemma}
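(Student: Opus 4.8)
The plan is to show directly that each of the two rectangular regions is mapped into itself by checking, for a point in the region, that both coordinates of its image again satisfy the defining inequalities. Write $(x',y')=W_0(x,y)$ as in \eqref{systemacase}. Since the fixed point $(x^*,y^*)$ satisfies the fixed-point equations \eqref{Fsystema}, it is natural to compare $x'$ with $x^*$ and $y'$ with $y^*$ by subtracting the corresponding fixed-point identities; this turns the verification into sign conditions on differences of the monotone functions $u\mapsto \tfrac{\alpha u}{1+u}$ and $v\mapsto \tfrac{\beta v^2}{\gamma+v}$, both of which are increasing on $\mathbb R_+$. The condition $\beta>\mu(1+\tfrac{\gamma\mu}{\alpha})$ is exactly what guarantees $(x^*,y^*)\in\mathbb R_+^2$ (Proposition \ref{fixed}), so it is available throughout.

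For $\Omega_2$ (the ``upper'' rectangle $x\ge x^*,\ y\ge y^*$) I would first show $y'\ge y^*$: from $y'=\tfrac{\alpha x}{1+x}+(1-\mu)y$ and the fixed-point identity $y^*=\tfrac{\alpha x^*}{1+x^*}+(1-\mu)y^*$ we get
$$y'-y^*=\Big(\tfrac{\alpha x}{1+x}-\tfrac{\alpha x^*}{1+x^*}\Big)+(1-\mu)(y-y^*)\ge 0,$$
since $x\ge x^*$ makes the first bracket nonnegative, $y\ge y^*$ makes the second nonnegative, and $1-\mu\ge 0$ by \eqref{parametr}. For $x'\ge x^*$ I would similarly subtract $x^*=\tfrac{\beta (y^*)^2}{\gamma+y^*}-\tfrac{\alpha x^*}{1+x^*}+x^*$ from the $x'$-equation, obtaining
$$x'-x^*=\Big(\tfrac{\beta y^2}{\gamma+y}-\tfrac{\beta (y^*)^2}{\gamma+y^*}\Big)-\Big(\tfrac{\alpha x}{1+x}-\tfrac{\alpha x^*}{1+x^*}\Big)+(x-x^*);$$
here the first bracket is $\ge 0$ because $y\ge y^*$, but the middle term is subtracted, so I must absorb it into $x-x^*$. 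Using that $\tfrac{\alpha x}{1+x}-\tfrac{\alpha x^*}{1+x^*}=\tfrac{\alpha (x-x^*)}{(1+x)(1+x^*)}\le \alpha(x-x^*)\le x-x^*$ (as $0<\alpha\le 1$ and $x,x^*\ge 0$), the quantity $(x-x^*)-\big(\tfrac{\alpha x}{1+x}-\tfrac{\alpha x^*}{1+x^*}\big)$ is nonnegative, which together with the first bracket gives $x'\ge x^*$. Finally one checks the image is not the point $(x^*,y^*)$ itself — this only fails when $(x,y)=(x^*,y^*)$, which is excluded — so $W_0(\Omega_2)\subset\Omega_2$.

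For $\Omega_1$ (the ``lower'' rectangle $0\le x\le x^*,\ 0\le y\le y^*$, minus the fixed point) the argument is the mirror image: the same two identities give $y'\le y^*$ and $x'\le x^*$ by reversing every inequality (now $x\le x^*$, $y\le y^*$), while $x'\ge 0$ and $y'\ge 0$ hold because $W_0$ maps $\mathbb R_+^2$ to itself under \eqref{parametr}, as already noted in the text. Again the image equals $(x^*,y^*)$ only if the preimage does, so excluding that one point is preserved, and $W_0(\Omega_1)\subset\Omega_1$. The one place that needs genuine care — the ``hard part'' — is the estimate for $x'-x^*$, where the emergence term enters with the opposite sign to what one wants; the key observation making it work is the elementary bound $\tfrac{\alpha x}{1+x}-\tfrac{\alpha x^*}{1+x^*}\le x-x^*$ valid precisely because $0<\alpha\le 1$, so that the net effect of the $x$-dynamics is order-preserving relative to $x^*$. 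Everything else reduces to monotonicity of two one-variable functions and the sign conditions $1-\mu\ge0$, $1-\alpha\ge0$ from \eqref{parametr}.
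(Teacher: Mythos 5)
Your proof is correct and follows essentially the same route as the paper's: compare $(x',y')$ with $(x^*,y^*)$ using the fixed-point identities and the monotonicity on $\mathbb{R}_+$ of $u\mapsto\frac{\alpha u}{1+u}$, $v\mapsto\frac{\beta v^2}{\gamma+v}$ and $x\mapsto x\bigl(1-\frac{\alpha}{1+x}\bigr)$ --- your bound $\frac{\alpha x}{1+x}-\frac{\alpha x^*}{1+x^*}\le x-x^*$ is exactly the monotonicity of this last function, which the paper invokes directly. Your explicit check that the image never equals the excluded point $(x^*,y^*)$ is a small point the paper's proof passes over silently.
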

\begin{proof}
\textbf{1)} Let $0\leq x\leq x^*, 0\leq y\leq y^*.$ Then
$$x'-x^*=\frac{\beta y^2}{\gamma+y}+x(1-\frac{\alpha}{1+x})-x^*\leq\frac{\beta y^{*2}}{\gamma+y^*}+x^*(1-\frac{\alpha}{1+x})-x^*$$
$$=\frac{\beta y^{*2}}{\gamma+\frac{\gamma\mu}{\beta-\mu}}-\frac{\alpha x^*}{1+x}\leq\frac{\beta-\mu}{\gamma}y^{*2}-\frac{\alpha x^*}{1+x^*}$$
$$=\frac{\beta-\mu}{\gamma}y^{*2}-\mu y^*=y^*(\frac{\beta-\mu}{\gamma}\cdot\frac{\gamma\mu}{\beta-\mu}-\mu)=0.$$
$y^*-y'=y^*-(1-\mu)y-\frac{\alpha x}{1+x}\geq y^*-(1-\mu)y^*-\frac{\alpha x^*}{1+x^*}=\mu y^*-\frac{\alpha x^*}{1+x^*}=0.$\\
Thus $(x',y')\in W_0(\Omega_1)\subset\Omega_1$

\textbf{2)} Let $x\geq x^*, y\geq y^*.$ Then
$$x^*-x'=x^*-\frac{\beta y^2}{\gamma+y}-x(1-\frac{\alpha}{1+x})\leq x^*-\frac{\beta y*^2}{\gamma+y^*}-x^*(1-\frac{\alpha}{1+x})$$
$$=\frac{\alpha x^*}{1+x}-\frac{\beta-\mu}{\gamma}y^{*^2}\leq\frac{\alpha x^*}{1+x^*}-\frac{\beta-\mu}{\gamma}y^{*^2}$$
$$=y^*(\mu-\frac{\beta-\mu}{\gamma}\frac{\gamma\mu}{\beta-\mu})=0.$$
$$y'-y^*=\frac{\alpha x}{1+x}+(1-\mu)y-y^*\geq\frac{\alpha x^*}{1+x^*}+(1-\mu)y^*-y^*=0.$$
Thus $(x',y')\in W_0(\Omega_2)\subset\Omega_2.$
\end{proof}

The following theorem  describes the trajectory of any point $(x^{(0)}, y^{(0)})$ in invariant sets.
\begin{thm}\label{pr1} For the operator $W_{0}$ given by (\ref{systemacase}) (i.e. under condition (\ref{parametr})),
if $\beta>\mu(1+\frac{\gamma\mu}{\alpha})$ then for any initial point $(x^{(0)}, y^{(0)})$, the following hold
$$\lim_{n\to \infty}x^{(n)}=\left\{\begin{array}{ll}
0, \ \ \ \ \ \ \mbox{if} \ \ (x^{(0)}, y^{(0)})\in\Omega_1, \\[2mm]
+\infty, \ \ \mbox{if} \ \ (x^{(0)}, y^{(0)})\in\Omega_2
\end{array}\right.$$
$$\lim_{n\to \infty}y^{(n)}=\left\{\begin{array}{ll}
0, \ \ \ \ \ \ \mbox{if} \ \ (x^{(0)}, y^{(0)})\in\Omega_1, \\[2mm]
\frac{\alpha}{\mu}, \ \ \ \ \  \mbox{if} \ \ (x^{(0)}, y^{(0)})\in\Omega_2
\end{array}\right.$$
where $(x^{(n)}, y^{(n)})=W_0^n(x^{(0)}, y^{(0)})$.
\end{thm}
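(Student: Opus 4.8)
The plan is to reduce Theorem~\ref{pr1} to Theorem~\ref{pr} by showing that on each of the two invariant sets the dynamics eventually falls into one of the two regimes already analysed there. Since $\Omega_1$ and $\Omega_2$ are invariant (by the preceding Lemma), a trajectory starting in $\Omega_1$ stays in $\Omega_1$ and a trajectory starting in $\Omega_2$ stays in $\Omega_2$; this is the structural backbone of the argument.

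First I would treat the case $(x^{(0)},y^{(0)})\in\Omega_2$. Here $y^{(n)}\ge y^*$ for all $n$, and one checks that $y^*=\frac{\gamma\mu}{\beta-\mu}<\frac{\alpha}{\mu}$ precisely under the standing hypothesis $\beta>\mu(1+\frac{\gamma\mu}{\alpha})$; however this only gives $y^{(n)}\ge y^*$, not $y^{(n)}>\frac{\alpha}{\mu}$, so part (i) of Theorem~\ref{pr} does not apply verbatim. Instead I would argue directly: on $\Omega_2$ one has $x^{(n)}\ge x^*>0$, hence $\frac{\alpha x^{(n)}}{1+x^{(n)}}\ge\frac{\alpha x^*}{1+x^*}=\mu y^*$, and more is true — I would show $x^{(n)}$ is nondecreasing and in fact $x^{(n)}\to+\infty$ by exhibiting a strictly positive lower bound on the increment $x^{(n+1)}-x^{(n)}=\frac{\beta (y^{(n)})^2}{\gamma+y^{(n)}}-\frac{\alpha x^{(n)}}{1+x^{(n)}}$. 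The key observation is that as long as $x^{(n)}$ stays bounded, $\frac{\alpha x^{(n)}}{1+x^{(n)}}$ stays bounded away from $\alpha$, while $\frac{\beta(y^{(n)})^2}{\gamma+y^{(n)}}$ is bounded below by a positive constant (using $y^{(n)}\ge y^*>0$), forcing the increment to stay bounded below by a positive number — contradiction. Once $x^{(n)}\to+\infty$, the $y$-equation gives $y^{(n+1)}-y^{(n)}=\frac{\alpha x^{(n)}}{1+x^{(n)}}-\mu y^{(n)}\to\alpha-\mu y^{(n)}$, and a squeeze argument identical to the one in Theorem~\ref{pr}(i) — comparing $y^{(n)}$ to the affine recursion with limit $\frac{\alpha}{\mu}$ — yields $y^{(n)}\to\frac{\alpha}{\mu}$.

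For the case $(x^{(0)},y^{(0)})\in\Omega_1$, I would observe that $y^{(n)}\le y^*<\frac{\alpha}{\mu}$ for all $n$, so in particular there exists $n_0$ (namely $n_0=0$) with $y^{(n_0)}\le\frac{\alpha}{\mu}$. Theorem~\ref{pr}(ii) then applies \emph{provided} $\beta\le\mu(1+\frac{\gamma\mu}{\alpha})$ — but that is exactly the negation of our standing hypothesis, so Theorem~\ref{pr}(ii) cannot be invoked directly either. This is the main obstacle. To get around it I would instead exploit the boundedness $0\le x^{(n)}\le x^*$, $0\le y^{(n)}\le y^*$ on $\Omega_1$ together with a Lyapunov-type monotone quantity. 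The natural candidate is $c^{(n)}=x^{(n)}+y^{(n)}$: a direct computation gives $c^{(n)}=c^{(n-1)}+\frac{y^{(n-1)}}{\gamma+y^{(n-1)}}\big((\beta-\mu)y^{(n-1)}-\gamma\mu\big)$, and the bracket is negative precisely when $y^{(n-1)}<\frac{\gamma\mu}{\beta-\mu}=y^*$, which holds strictly on $\Omega_1\setminus\{(x^*,y^*)\}$. Hence $c^{(n)}$ is strictly decreasing and bounded below by $0$, so it converges; then the same closure-of-the-recursion argument used at the end of the proof of Theorem~\ref{pr} shows the limit point $(\tilde x,\tilde y)$ must be a fixed point lying in $\Omega_1$, and the only such point is $(0,0)$, giving $x^{(n)}\to0$, $y^{(n)}\to0$.

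The delicate points to get right are: (a) in the $\Omega_2$ case, making the lower bound on the $x$-increment genuinely uniform so that $x^{(n)}\to+\infty$ rather than merely "$x^{(n)}$ increasing"; and (b) in the $\Omega_1$ case, justifying that a single monotone quantity $c^{(n)}$ suffices to pin down both coordinates — one needs that convergence of $c^{(n)}=x^{(n)}+y^{(n)}$ together with the recursion (\ref{recc}) forces convergence of each coordinate, which follows because any limit of a convergent subsequence of $(x^{(n)},y^{(n)})$ (it lives in the compact set $\overline{\Omega_1}$) is a fixed point, and $(0,0)$ is the unique fixed point in $\overline{\Omega_1}$ since $(x^*,y^*)$ is excluded by the strict monotonicity of $c^{(n)}$. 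I expect (a) to be the more computation-heavy step and (b) to be the more conceptually subtle one.
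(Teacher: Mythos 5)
There is a genuine gap in your $\Omega_2$ argument, and it is exactly at the step you flagged as ``delicate point (a).'' Your claimed uniform positive lower bound on the increment $x^{(n+1)}-x^{(n)}=\frac{\beta (y^{(n)})^2}{\gamma+y^{(n)}}-\frac{\alpha x^{(n)}}{1+x^{(n)}}$ does not hold. From the fixed-point equations one has $\frac{\beta (y^*)^2}{\gamma+y^*}=\mu y^*=\frac{\alpha x^*}{1+x^*}$, so the only lower bound that $y^{(n)}\ge y^*$ buys you is $\frac{\beta (y^{(n)})^2}{\gamma+y^{(n)}}\ge\mu y^*=\frac{\alpha x^*}{1+x^*}$. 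Under your hypothesis $x^{(n)}\le M$ (with necessarily $M\ge x^*$ on $\Omega_2$) this gives increment $\ge\frac{\alpha x^*}{1+x^*}-\frac{\alpha M}{1+M}\le 0$, so no contradiction is produced. Worse, the increment can be strictly negative on $\Omega_2$: at a point $(x,y^*)$ with $x>x^*$ one gets $x'-x=\frac{\alpha x^*}{1+x^*}-\frac{\alpha x}{1+x}<0$, so $x^{(n)}$ is not even nondecreasing from the start. The quantity that \emph{is} monotone on $\Omega_2$ is the sum: $x^{(n+1)}+y^{(n+1)}-x^{(n)}-y^{(n)}=\frac{(\beta-\mu)y^{(n)}}{\gamma+y^{(n)}}(y^{(n)}-y^*)\ge0$. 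The paper's route is to first prove, by a four-case analysis of the signs of the coordinatewise increments (its Lemmas on $\Omega_1$ and $\Omega_2$), that both $x^{(n)}$ and $y^{(n)}$ are \emph{eventually} monotone increasing on $\Omega_2$, and then to telescope the sum identity: once $y^{(n)}\ge y^{(n_0)}>y^*$ for $n\ge n_0$, the per-step gain of $x^{(n)}+y^{(n)}$ is bounded below by a fixed positive constant, and since $y^{(n)}$ is bounded (its Lemma \ref{border}), $x^{(n)}\to+\infty$. Your proposal is missing this eventual-monotonicity step, and the shortcut you propose in its place fails.

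Your $\Omega_1$ argument is closer to the paper's in spirit (the same Lyapunov quantity $c^{(n)}=x^{(n)}+y^{(n)}$ appears there), but it also has a soft spot: convergence of $x^{(n)}+y^{(n)}$ does not by itself give convergence of each coordinate, and your claim that ``any subsequential limit is a fixed point'' is not justified — what you actually get from $c^{(n+1)}-c^{(n)}\to0$ is only that every subsequential limit $(\bar x,\bar y)$ has $\bar y\in\{0,y^*\}$, and points $(\bar x,0)$ with $\bar x>0$ are not fixed. One then needs an invariance argument on the limit set (or, as the paper does, eventual monotonicity of each coordinate separately via its Lemma on $\Omega_1$) to conclude $(\bar x,\bar y)=(0,0)$. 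So the overall architecture (invariance plus a monotone sum) matches the paper, but as written the proposal does not close either case.
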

\begin{proof} Adding $x^{(n)}$ and $y^{(n)}$ we get (see (\ref{x^*y^*}))
\begin{equation}\label{xn+yn}
x^{(n)}+y^{(n)}=x^{(n-1)}+y^{(n-1)}-\frac{(\beta-\mu)y^{(n-1)}}{\gamma+y^{(n-1)}}(y^*-y^{(n-1)}).
\end{equation}

We need to the following lemmas.
\begin{lemma}\label{border} For any parameters satisfying (\ref{parametr}) and for arbitrary
initial point $(x^{(0)}, y^{(0)})$ in $\mathbb{R}^2_{+}$ the sequence $y^{(n)}$ (defined in (\ref{recc})) is bounded:
$$0\leq y^{(n)}\leq \left\{\begin{array}{ll}
y^{(0)}, \ \ \mbox{if} \ \ y^{(0)}>\frac{\alpha}{\mu}\\[2mm]
\frac{\alpha}{\mu}, \ \ \mbox{if} \ \ y^{(0)}<\frac{\alpha}{\mu}.
\end{array}\right.$$
\end{lemma}
\begin{proof} Note that on the set $\mathbb{R}_{+}$ the function $y(x)=\frac{x}{1+x}$ is increasing and
 bounded by 1. Therefore
 $$y^{(n)}=\frac{\alpha x^{(n-1)}}{1+x^{(n-1)}}+(1-\mu)y^{(n-1)}\leq\alpha+(1-\mu)y^{(n-1)}\leq\alpha+(1-\mu)(\alpha+(1-\mu)y^{(n-2)})\leq$$
 $$\alpha+\alpha(1-\mu)+(1-\mu)^2(\alpha+(1-\mu)y^{(n-3)})\leq
 ...\leq\alpha+\alpha(1-\mu)+\alpha(1-\mu)^2+...+\alpha(1-\mu)^{n-1}$$ $$+(1-\mu)^{n}y^{(0)}=\frac{\alpha}{\mu}+(1-\mu)^{n}(y^{(0)}-\frac{\alpha}{\mu}).$$

 Thus if the initial point $y^{(0)}>\frac{\alpha}{\mu}$ then for any $m\in\mathbb{N}$ we have $0\leq y^{(m)}\leq y^{(0)}$. Moreover, if $y^{(0)}<\frac{\alpha}{\mu}$ then for any $m\in\mathbb{N}$ we have $0\leq y^{(m)}\leq \frac{\alpha}{\mu}.$
\end{proof}

\begin{lemma}\label{Theta1} For sequences $x^{(n)}$ and $y^{(n)}$ in the set $\Omega_1$ the following statements hold:
\begin{itemize}
  \item[1)] For any $n\in\mathbb{N}$ the inequalities
   $x^{(n)}<x^{(n+1)}$ and $y^{(n)}<y^{(n+1)}$ can not be satisfied at the same time.
  \item[2)] If  $x^{(m-1)}>x^{(m)}$, $y^{(m-1)}>y^{(m)}$ for some $m\in\mathbb{N}$ then $x^{(m)}>x^{(m+1)}$, $y^{(m)}>y^{(m+1)}$.
  \item[3)] If  $\beta>\mu(1+\frac{\gamma\mu}{\alpha})$ then $x^{(m-1)}>x^{(m)}$, $y^{(m-1)}<y^{(m)}$ can not be satisfied for any $m\in\mathbb{N}.$
  \item[4)] If $\beta>\mu(1+\frac{\gamma\mu}{\alpha})$ then $x^{(m-1)}<x^{(m)}$, $y^{(m-1)}>y^{(m)}$ can not be satisfied for any $m\in\mathbb{N}.$
\end{itemize}
\end{lemma}
\begin{proof} Let $(x^{(0)}, y^{(0)})\in\Omega_1.$ For each $n\in\mathbb{N}$, $x^{(n)}\leq x^*$ and $y^{(n)}\leq y^*.$
\begin{itemize}
  \item[1)] From (\ref{xn+yn})  we get $(x^{(n)}-x^{(n-1)})+(y^{(n)}-y^{(n-1)})<0.$
  Consequently, $x^{(n)}<x^{(n+1)}$ and $y^{(n)}<y^{(n+1)}$ can not be satisfied at the same time.
  \item[2)] Since the functions $u(x)=x(1-\frac{\alpha}{1+x}), v(y)=\frac{y^2}{\gamma+y}$ are monotonically increasing and by $x^{(m-1)}-x^{(m)}>0$,\ $y^{(m-1)}-y^{(m)}>0$  we have
  $$x^{(m-1)}(1-\frac{\alpha}{1+x^{(m-1)}})-x^{(m)}(1-\frac{\alpha}{1+x^{(m)}})>0, \ \frac{x^{(m-1)}}{1+x^{(m-1)}}>\frac{x^{(m)}}{1+x^{(m)}}$$ and
  $$\frac{y^{(m-1)^2}}{\gamma+y^{(m-1)}}>\frac{y^{(m)^2}}{\gamma+y^{(m)}}.$$
 Then
 $$x^{(m)}-x^{(m+1)}=\beta(\frac{y^{(m-1)^2}}{\gamma+y^{(m-1)}}-\frac{y^{(m)^2}}{\gamma+y^{(m)}})+x^{(m-1)}(1-\frac{\alpha}{1+x^{(m-1)}})-x^{(m)}(1-\frac{\alpha}{1+x^{(m)}})>0,$$ $$y^{(m)}-y^{(m+1)}=\alpha(\frac{x^{(m-1)}}{1+x^{(m-1)}}-\frac{x^{(m)}}{1+x^{(m)}})+(1-\mu)(y^{(m-1)}-y^{(m)})>0.$$
\item[3)] Let $\beta>\mu(1+\frac{\gamma\mu}{\alpha}).$ Assume   $x^{(m-1)}>x^{(m)}$, $y^{(m-1)}<y^{(m)}$ hold for any $m\in\mathbb{N}.$
      Then since $x^{(n)}$ is decreasing and bounded; $y^{(n)}$ is increasing and bounded (see Lemma \ref{border}) there exist their limits $\tilde{x}$, $\tilde{y}\neq0$ respectively. By (\ref{recc}) we obtain
$$\left\{
  \begin{array}{ll}
   \frac{\beta \tilde{y}^2}{\gamma+\tilde{y}} =\frac{\alpha \tilde{x}}{1+\tilde{x}}\\[2mm]
   \frac{\alpha \tilde{x}}{1+\tilde{x}}=\mu \tilde{y}\\
  \end{array}
\right.$$ i.e. $\tilde{x}=0, \tilde{y}=0.$ This contradiction shows that if  $\beta>\mu(1+\frac{\gamma\mu}{\alpha})$ then $x^{(m-1)}>x^{(m)}$, $y^{(m-1)}<y^{(m)}$ can not be satisfied for any $m\in\mathbb{N}.$
\item[4)] Let $\beta>\mu(1+\frac{\gamma\mu}{\alpha}).$ Assume $x^{(m-1)}<x^{(m)}$, $y^{(m-1)}>y^{(m)}$ hold for any $m\in\mathbb{N}.$  $(x^{(m)},y^{(m)})\in\Omega_1.$ Then since $x^{(n)}$ is increasing and bounded; $y^{(n)}$ is decreasing and bounded there exist their limits $\tilde{x}\neq0$, $\tilde{y}$ respectively. But by (\ref{recc}) we obtain
$\tilde{x}=\lim\limits_{m\rightarrow\infty}x^{(m)}=0.$ This completes proof of part 4.
\end{itemize}
\end{proof}
\begin{lemma}\label{Theta2} For sequences $x^{(n)}$ and $y^{(n)}$ in the set $\Omega_2$ the following statements hold:
\begin{itemize}
  \item[1)] For any $n\in\mathbb{N}$ the inequalities
   $x^{(n)}>x^{(n+1)}$ and $y^{(n)}>y^{(n+1)}$ can not be satisfied at the same time.
  \item[2)] If  $x^{(m-1)}<x^{(m)}$, $y^{(m-1)}<y^{(m)}$ for some $m\in\mathbb{N}$ then $x^{(m)}<x^{(m+1)}$, $y^{(m)}<y^{(m+1)}$.
  \item[3)] If  $\beta>\mu(1+\frac{\gamma\mu}{\alpha})$ then $x^{(m-1)}>x^{(m)}$, $y^{(m-1)}<y^{(m)}$ can not be satisfied for any $m\in\mathbb{N}.$
  \item[4)] If $\beta>\mu(1+\frac{\gamma\mu}{\alpha})$ then $x^{(m-1)}<x^{(m)}$, $y^{(m-1)}>y^{(m)}$ can not be satisfied for any $m\in\mathbb{N}.$
\end{itemize}
\end{lemma}
\begin{proof}
Let $(x^{(0)}, y^{(0)})\in\Omega_2.$ For each $n\in\mathbb{N}$, $x^{(n)}\geq x^*$ and $y^{(n)}\geq y^*.$
\begin{itemize}
  \item[1)] From (\ref{xn+yn})  we get $(x^{(n)}-x^{(n-1)})+(y^{(n)}-y^{(n-1)})>0.$
  Consequently, $x^{(n)}>x^{(n+1)}$ and $y^{(n)}>y^{(n+1)}$ can not be satisfied at the same time.
  \item[2)] By $x^{(m)}-x^{(m-1)}>0$, $y^{(m)}-y^{(m-1)}>0$ we have
  $$x^{(m)}(1-\frac{\alpha}{1+x^{(m)}})-x^{(m-1)}(1-\frac{\alpha}{1+x^{(m-1)}})>0, \ \frac{x^{(m)}}{1+x^{(m)}}>\frac{x^{(m-1)}}{1+x^{(m-1)}}$$ and
  $$\frac{y^{(m)^2}}{\gamma+y^{(m)}}>\frac{y^{(m-1)^2}}{\gamma+y^{(m-1)}}.$$
 Then
 $$x^{(m+1)}-x^{(m)}=\frac{\beta y^{(m)^2}}{\gamma+y^{(m)}}-\frac{\beta y^{(m-1)^2}}{\gamma+y^{(m-1)}}+x^{(m)}(1-\frac{\alpha}{1+x^{(m)}})-x^{(m-1)}(1-\frac{\alpha}{1+x^{(m-1)}})>0,$$ $$y^{(m+1)}-y^{(m)}=\alpha(\frac{x^{(m)}}{1+x^{(m)}}-\frac{x^{(m-1)}}{1+x^{(m-1)}})+(1-\mu)(y^{(m)}-y^{(m-1)})>0.$$
\item[3)] Similarly to the proof of part $3$ of Lemma \ref{Theta1}.
\item[4)] Let $\beta>\mu(1+\frac{\gamma\mu}{\alpha})$. Assume if for any $m\in\mathbb{N}$ the inequalities  $x^{(m-1)}<x^{(m)}$, $y^{(m-1)}>y^{(m)}$  are satisfied at the same time, i.e. $x^{(m)}$ is increasing and $y^{(m)}$ is decreasing.
       Let
       $$\Delta^{(m)}=(x^{(m+1)}-x^{(m)})+(y^{(m+1)}-y^{(m)})=\frac{(\beta-\mu)y^{(m)}}{\gamma+y^{(m)}}(y^{(m)}-y^*).$$

Since $\{y^{(m)}\}$ is decreasing, $\Delta^{(m)}>0$ for $\beta>\mu(1+\frac{\gamma\mu}{\alpha})$ we conclude that the sequence   $\{\Delta^{(m)}\}$ is decreasing and bounded from below. Thus $\{\Delta^{(m)}\}$ has a limit and since $y^{(m)}$ has limit we conclude that $x^{(m)}$ has a finite limit. By (\ref{recc}) we have
$$\lim_{m\rightarrow\infty}x^{(m)}=0, \ \ \lim_{m\rightarrow\infty}y^{(m)}=0.$$
But this is a contradiction to $\lim\limits_{m\rightarrow\infty}x^{(m)}\neq0$. This completes proof of part 4.
\end{itemize}
\end{proof}
\begin{lemma}\label{Theta12} If $x^{(n)}$ and $y^{(n)}$ in $\Omega_1,$ then there exists $n_0\in\mathbb{N}$ such that $x^{(n)}$ and $y^{(n)}$ are decreasing for $n\geq n_0$, if $x^{(n)}$ and $y^{(n)}$ in $\Omega_2,$ then there exists $m_0\in\mathbb{N}$ such that $x^{(n)}$ and $y^{(n)}$ are increasing for $n\geq m_0$.
\end{lemma}
\begin{proof} Monotonicity of $x^{(n)}$ and $y^{(n)}$  follow from Lemma \ref{Theta1} and Lemma \ref{Theta2}.
\end{proof}
\begin{lemma} If $x^{(n)}$ in the set $\Omega_2$ then $x^{(n)}$ is unbounded from above.
\end{lemma}
\begin{proof} There exists $n_0$ such that the sequences $x^{(n)}$ is increasing for $n\geq n_0.$
Consider
\begin{equation}\label{xsys}
\left\{
  \begin{array}{ll}
   (x^{(n_{0}+1)}-x^{(n_{0})})+(y^{(n_{0}+1)}-y^{(n_{0})})=\frac{(\beta-\mu)y^{(n_{0})}}{\gamma+y^{(n_{0})}}(y^{(n_{0})}-y^*) \\[2mm]
   (x^{(n_{0}+2)}-x^{(n_{0}+1)})+(y^{(n_{0}+2)}-y^{(n_{0}+1)})=\frac{(\beta-\mu)y^{(n_{0}+1)}}{\gamma+y^{(n_{0}+1)}}(y^{(n_{0}+1)}-y^*) \\
     ... \\
   (x^{(n-1)}-x^{(n-2)})+(y^{(n-1)}-y^{(n-2)})=\frac{(\beta-\mu)y^{(n-2)}}{\gamma+y^{(n-2)}}(y^{(n-2)}-y^*)\\[2mm]
   (x^{(n)}-x^{(n-1)})+(y^{(n)}-y^{(n-1)})=\frac{(\beta-\mu)y^{(n-1)}}{\gamma+y^{(n-1)}}(y^{(n-1)}-y^*)
  \end{array}
\right.
\end{equation}
Adding equations of (\ref{xsys}) we get

$$(x^{(n)}-x^{(n_{0})})+(y^{(n)}-y^{(n_{0})})$$
\begin{equation}\label{xsyst}
=(\beta-\mu)\left(\frac{y^{(n_{0})}}{\gamma+y^{(n_{0})}}(y^{(n_{0})}-y^*)+...+\frac{y^{(n-1)}}{\gamma+y^{(n-1)}}(y^{(n-1)}-y^*)\right).
\end{equation}
Let $y^{(n)}$ (see Lemma \ref{border}) is  bounded from above by $\theta$.
By (\ref{xsyst}) we have $$x^{(n)}>x^{(n_{0})}+y^{(n_{0})}-\theta+(\beta-
\mu)(n-n_{0})(y^{(n_{0})}-y^*)\cdot\frac{y^{(n_{0})}}{\gamma+y^{(n_{0})}}.$$
For $\beta>\mu(1+\frac{\gamma\mu}{\alpha})$ from $$\lim_{n\rightarrow\infty}\left(x^{(n_{0})}+y^{(n_{0})}-\theta+(\beta-
\mu)(n-n_{0})(y^{(n_{0})}-y^*)\cdot\frac{y^{(n_{0})}}{\gamma+y^{(n_{0})}}\right)=+\infty$$ it follows that  $x^{(n)}$ is not bounded from above.
\end{proof}

Now we continue the proof of theorem.

If $(x^{(0)}, y^{(0)})\in\Omega_1$ then by  Lemma \ref{Theta12} there exist their limits $\tilde{x}$, $\tilde{y}$ respectively.
By (\ref{recc}) we have
$$\lim_{n\rightarrow\infty}x^{(n)}=0, \ \ \lim_{n\rightarrow\infty}y^{(n)}=0.$$
If $(x^{(0)}, y^{(0)})\in\Omega_2$ then for $\beta>\mu(1+\frac{\gamma\mu}{\alpha})$ the sequence $y^{(n)}$ has limit $\tilde{y}$  (see Lemma \ref{border}).
 Consequently, by (\ref{recc}) and $\lim\limits_{n\rightarrow\infty}x^{(n)}=+\infty $ we get $\tilde{y}=\frac{\alpha}{\mu}$.
 Theorem is proved.
 \end{proof}

\subsection{On the set $\mathbb{R}^2_+\setminus(\Omega_1\bigcup\Omega_2)$}\

In the following examples, we show trajectories of initial points from the set $\mathbb{R}^2_+\setminus(\Omega_1\bigcup\Omega_2)$.

Let us consider the operator with parameter values $\alpha=0.8, \beta=0.9, \gamma=2, \mu=0.4$ satisfying the condition $\beta>\mu(1+\frac{\gamma\mu}{\alpha})$.
 Then by (\ref{x^*y^*}), we get $x^*=4$, $y^* = 1.6$.
\begin{ex} If the initial point is $x^{(0)}=0.2, y^{(0)}=4$ then the trajectory of system (\ref{systemacase}) is shown in the Fig.\ref{Fig.1}, i.e.,
$$\lim_{n\to \infty}x^{(n)}=0, \ \lim_{n\to \infty}y^{(n)}=0.$$
If the initial point is $x^{(0)}=0.2, y^{(0)}=5$ then the trajectory of system (\ref{systemacase}) is shown in the Fig.\ref{Fig.1}. In this case the first coordinate of the trajectory goes to infinite and the second coordinate has limit point $2$, i.e.,
$$\lim_{n\to \infty}x^{(n)}=+\infty, \ \lim_{n\to \infty}y^{(n)}=\frac{\alpha}{\mu}=2.$$
\end{ex}

\begin{ex} If the initial point is $x^{(0)}=5.6$, $y^{(0)}=0.2$ then the trajectory of system (\ref{systemacase}) is shown in the Fig.\ref{Fig.2}, i.e., it converges to $(0,0)$.

If the initial point is $x^{(0)}=7$, $y^{(0)}=0.2$ then the trajectory of system (\ref{systemacase}) is shown in the Fig.\ref{Fig.2}. In this case the first coordinate of the trajectory goes to infinite and the second coordinate has limit point $2$.
\end{ex}
\begin{figure}[h!]
\begin{multicols}{2}
\hfill
\includegraphics[width=0.52\textwidth]{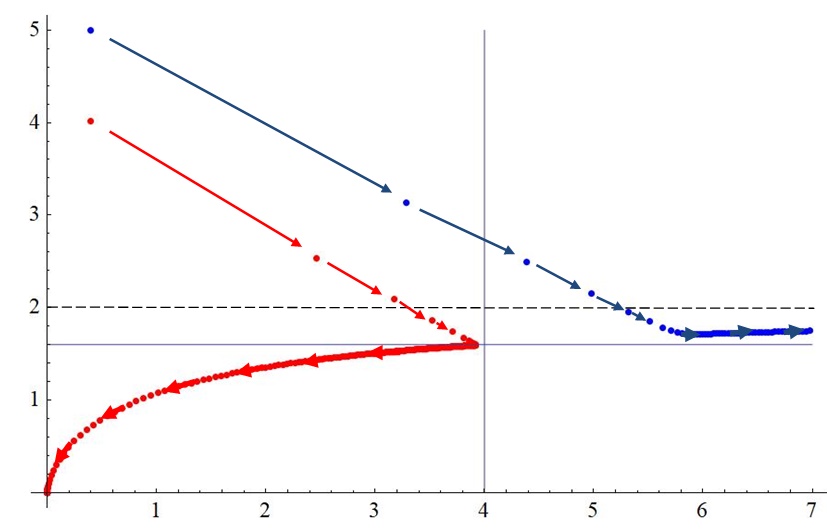}\\
\hfill
\caption{$x^{(0)}=0.2,\ y^{(0)}=4$ and $x^{(0)}=0.2,\ y^{(0)}=5$}\label{Fig.1}
\hfill
\includegraphics[width=0.52\textwidth]{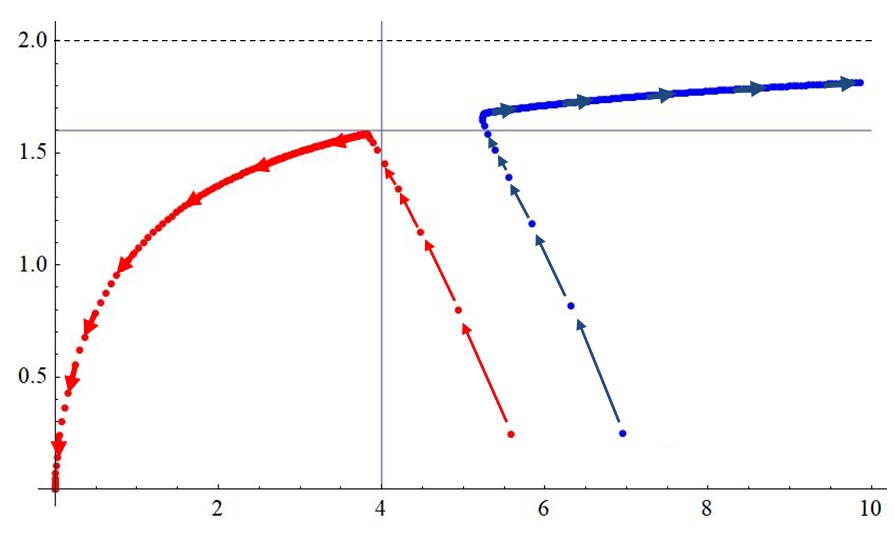}\\
\hfill
\caption{$x^{(0)}=5.6,\ y^{(0)}=0.2$ and $x^{(0)}=7,\ y^{(0)}=0.2$}\label{Fig.2}
\hfill
\end{multicols}
\end{figure}

\begin{rk}
We note that for continuous time system (2) the following results are known (see \cite{J.Li}):  All fixed (equilibrium) points are found and their types are determined. Moreover, local behavior of the dynamical system in the neighborhood   of the fixed point $(0,0)$ is studied. In our discrete-time case, for  $d_0=d_1=0$ we also determined types of all fixed points. Besides this we have been able to study global behavior of the system in neighborhood of $(0,0)$. Also we have found invariant sets and studied the dynamical system on the sets. The last results are not known for the continuous time.
\end{rk}

\section{Biological interpretations}
Each point (vector) $z=(x;y)\in \mathbb{R}^{2}_{+}$ can be considered as a state (a measure) of the mosquito
population.

Let us give some interpretations of our main results:
\begin{itemize}
\item[(a)] (Case Theorem \ref{pr}) By (\ref{beta}) we have $\gamma\geq\frac{\alpha(\beta-\mu)}{\mu^2}.$ Under this condition on $\gamma$ (i.e. on Allee effects), the mosquito population dies;
\item[(b)] (Case Theorem \ref{pr1}) If the inequality $\gamma<\frac{\alpha(\beta-\mu)}{\mu^2}$ holds for Allee effects $\gamma$, then extinction or survival of the mosquito population depends on their initial state.
 \end{itemize}

\end{document}